\date{}
\definecolor{sah}{rgb}{0.66,0.33, 0.04}
\definecolor{adel4}{cmyk}{1,0,0,0}
\definecolor{adel3}{rgb}{0.66,0.33, 0.04}
\definecolor{adel1}{cmyk}{0,0.20,1,0}
\definecolor{adel2}{cmyk}{0,0.40,1,0.30}
\definecolor{adel0}{rgb}{0.99,0.60, 0.30}
\definecolor{trut}{rgb}{0.99,0.80, 0.00}
\definecolor{trus}{rgb}{0.00, 0.50, 0.00}
 \definecolor{trust}{rgb}{0.99, 0.99, 0.80}
\definecolor{MaCouleur}{rgb}{0,0.9,0.3}
\newcommand{\Z}{\mathbb{Z}}
\newcommand{\RR}{\mathbb{R}}
\newcommand{\T}{\mathbb{T}}
\theoremstyle{plain}
\newtheorem{theorem}{Theorem}
\newtheorem{remark}{Remark}
\title{Ergodicity effects on transport-diffusion equations with localized damping}
\begin{document}
\author[K. Ammari]{Ka\"is Ammari}
\address{UR Analysis and Control of PDEs, UR13ES64, Department of Mathematics, Faculty of Sciences of Monastir, University of Monastir, 5019 Monastir, Tunisia}
\email{kais.ammari@fsm.rnu.tn}
\author[T. Hmidi]{Taoufik Hmidi}
\address{Univ Rennes, CNRS, IRMAR -- UMR 6625, 
F-35000 Rennes, France}
\email{thmidi@univ-rennes1.fr}


\begin{abstract}
The main objective of this paper is to study the time decay of transport-diffusion equation with inhomogeneous localized damping in the multi-dimensional torus. The drift is governed by an autonomous Lipschitz vector field and the diffusion by the standard heat equation with small viscosity parameter $\nu$. In the first part we deal with the inviscid case and show some results on the time decay of the energy using in a crucial way the ergodicity and the unique ergodicity of   the flow generated by the drift. In the second part we analyze the same problem with small viscosity and provide quite similar results on the exponential decay uniformly with respect to the viscosity in some  logarithmic time scaling  of the \mbox{type $t\in [0,C_0\ln(1/\nu)]$}.    
\end{abstract}

\subjclass[2010]{35B40}
\keywords{Ergodicity effects, transport-diffusion equations, localized damping}

\maketitle

\tableofcontents

\section{Introduction}
In this short note we aim at exploring the time decay of the energy for 
 the   transport-diffusion  equation with localized damping   in the multi-dimensional torus $\T^d,$
\begin{equation} \label{eqn:tran}
\left\{ \begin{array}{ll}
  \partial_t \theta(t,x) + v(x) \cdot \nabla \theta(t,x) -\nu \Delta \theta(t,x)= -\phi(x)\,\theta(t,x) \quad (t,x)\in \mathbb{R}_+\times \T^d,&\\ 
  \theta(0,x)=\theta_0(x),
  \end{array}\right.
\end{equation}
where  $\phi: \T^d\to \RR_+$ is a given time-independent positive function and $\nu\geq0$ is a viscosity parameter. The vector field $v:\T^d\to \RR^d$ is assumed to be autonomous,  solenoidal, that is $\textnormal{div} \, v=0$,  and  belongs to the Lipschitz class. 
When $\phi$ is  bounded away from zero, meaning that one may find a constant $\mu>0$ such that $\phi(x)\geq \mu$ almost everywhere, then performing $L^2$-energy estimate it is quite easy to get 
\begin{align}\label{Energy-es}
\nonumber \frac12\frac{d}{dt}\|\theta(t)\|_{L^2}^2+\nu\|\nabla \theta(t)\|_{L^2}^2&=-\int_{\T^d}\phi(x)|\theta(t,x)|^2dx\\
&\leq-\mu  \|\theta(t)\|_{L^2}^2.
\end{align}
This implies in particular  the following time decay
$$
\forall\, t \geq 0,\quad \left\|\theta(t)\right\|_{L^2} \le 
\|\theta_0\|_{L^2} e^{-\mu t}.
$$
The main task here is to investigate the time decay when $\phi$ is not uniformly distributed on the torus and its support may be localized in a small  domain. In the absence of the drift $v=0$ and the viscosity $\nu=0$ the stabilization can not occur because in this case the solution is explicitly recovered through the formula,
$$
\theta(t,x)=\theta_0(x) e^{-t\phi(x)}.
$$
Thus the energy is conserved for  initial data whose supports do not intersect the support \mbox{of $\phi$.}

\medskip

Now let us discuss and  specify the main feature of the full equation \eqref{eqn:tran} which is mainly   governed  by two mechanisms,  the advection represented by  the velocity field $v$ and the diffusion given by the viscous part. First, let us neglect the diffusion part and focus  on  the inviscid equation \eqref{eqn:tran} with the vanishing viscosity $\nu=0$, then  one can  guess  that  the transport structure with special vector fields  would allow to advect  the mass in a recurrent way to the activity  zone of the damping   that could  guarantee the time decay.  Thus  we expect the ergodicity to be a crucial  phenomenon for flowing and spreading  the mass and de facto for  stabilizing  the inviscid system. As we shall see in  Theorem \ref{Theo-invic}, we will make  this intuitive idea more pertinent and rigorous in the framework of ergodic flows.  We emphasize that in our case the ergodicity allows to get  the  time  decay, however for the exponential decay we need  to deal with a special class of ergodic flows, known in the literature by  the  uniquely ergodic flows.  We shall give later some details about this terminology. Notice that  in the periodic setting lot of results around the ergodicity of continuous and  discrete dynamical system are  well-developed in the literature, we can refer for instance to \cite{Mao, Nikola, Saito, Walters}. 

\medskip

In \cite{Lebeau} Lebeau characterized the value of the best decay rate for the damped wave equation in terms of two quantities: the spectral abscissa and the mean value of the damping coefficient along the rays of geometrical optics, i.e., the Birkhoff limit of the damping coefficient, which illustrate the ergodicity effect on the dynamics. In other hand in \cite{Sjostrand} Sj\"ostrand give an asymptotic distribution of the eigenfrequencies associated to the damped wave operator, and show that the spectra is confined to a band determined by the Birkhoff limits of the damping coefficient and that certain averages of the imaginary parts converge to the average of the damping coefficient. Later and based on the work of Sj\"ostrand \cite{Sjostrand}, Asch and Lebeau \cite{Asch} give an asymptotic expression for the distribution of the imaginary parts of the eigenvalues for a radially symmetric geometry and establish the close links between the eigenfunctions and the rays of geometrical optics. For the one dimensional models, we can see \cite{Freitas} and \cite{Cox}. 

\medskip

Now let us focus on  the second mechanism of the equation \eqref{eqn:tran} and analyze the viscosity effects. We point out that without the  zero-mean   condition there is no hope to get a time decay, to be convinced it suffices to work with $\theta_0$ as a constant function   and $\phi=0$ then the solution is a constant.  However,  the spatial average $\int_{\T^d}\theta(t,x)dx$ is not conserved in general  due to the inhomogeneous damping term. Nevertheless, when $\phi$ is even $\phi(-x)=\phi(x)$ then odd symmetry is preserved by the dynamics. Consequently if we impose to  $\theta_0$ to be  odd then the solution keeps this symmetry for any time  and therefore the spatial average is always zero. With this special symmetry one may use Poincar\'e inequality in the torus and deduce from \eqref{Energy-es} that  
$$
 \frac12\frac{d}{dt}\|\theta(t)\|_{L^2}^2+\nu\|\theta(t)\|_{L^2}^2\leq0.
$$
From this we infer the exponential decay: $\forall\, t\geq0,\quad \|\theta(t)\|_{L^2}\le \|\theta_0\|_{L^2} e^{-\nu t}$, which is of course not uniform for vanishing viscosity $\nu\to0$. It is worthy to point out  that without the zero  average, the exponential decay is no longer satisfied. In this case  the transfer of mass to low frequencies  could happen preventing the energy from  any  decay rate. Now what happens   if we combine   both mechanisms and deal with the full  equation \eqref{eqn:tran}. In this setting,  several questions of important  interest emerge. For instance, how  the ergodicity  is  affected by  the diffusion and in particular what about the recurrence behavior,  which is essential to guide the trajectories to  the damping zone ?  Is it altered or reinforced by the diffusion? Another connected problem is whether or not the time decay occurs uniformly for the vanishing  viscosity. It is so hard to give a full and a complete answer to those problems. Here we shall only provide a partial answer by looking to the regime where the ergodicity is not altered by the diffusion. More precisely, we are able to exhibit  a  logarithmic  time scaling $0\leq t\leq \log\frac1\nu$ where  the exponential  time decay continue to survive  uniformly  with respect to small  viscosity. A precise statement will be established in Theorem \ref{Theo-diff}. 

\medskip

In order to  formulate carefully our main results, we  need to recall some tools and fix some terminology from the theory of  dynamical systems.  First, let us  briefly see how to define the flow on the torus associated to a given autonomous  vector field $ {v}:\T^d\to\RR^d$. Here the multi-dimensional torus $\T^d$ is identified to the factor space $ \RR^d/\Z^d$ and any  function defined on the torus can be identified  with  its lift defined on  $\RR^d.$  Now  let $x$ be an arbitrary point of the set  $ \T^d$ and  define the orbit $t\mapsto X(t)\in\RR^d$ as the  unique  solution of the ODE$$
\dot{X}(t)=v(X(t)),\, X(0)=x.
$$
By projecting down  this orbit on the torus using  the natural projection $\pi: \RR^d\to \RR^d/\Z^d$, allows to cover the  analogous  orbit on the torus. By this way we can generate a flow on the torus that we denote   throughout this  paper by $\psi$.  To avoid the  difficulties related to the blow-up phenomenon, we assume that the vector field is complete meaning that all the orbits  are globally defined in time.  A sub-class of those vector fields  is given by Lipschitz class which will be a canonical assumption in the paper.  Therefore the flow $\psi$ can be viewed  as a mapping $\psi:\, \RR\times\T^d\to \T^d$ and still satisfies the ODE
\begin{equation}\label{flow1} 
\left\{ \begin{array}{ll}
  \partial_t\psi(t,x)=v(\psi(t,x))\quad (t,x)\in \mathbb{R}_+\times \T^d,&\\ 
  \psi(0,x)=x\in\T^d.
  \end{array}\right.
\end{equation}
Then by this way  we generate  a group of continuous transformation on the torus $\big\{\psi_t\big\}_{t\in\RR}$, and in particular we have  $\psi_t\,:\T^d\to\T^d$ with  $\psi_t\circ\psi_s=\psi_{t+s}$.  Moreover, since the vector field $v$ is assumed to be solenoidal or  incompressible then the  Lebesgue measure denoted by $\lambda$ or $dx$ is invariant by the flow. This means that for any Borel set $A\subset \T^d$ we have
$$
\forall t\in\RR,\quad \lambda(\psi_t(A))=\lambda(A).
$$
Remark that the torus $\T^d$ is normalized, that is $\lambda(\T^d)=1.$ A set $A\subset \T^d$ is called  {\it invariant set}  if  $\psi_t(A)=A, \forall t\in\RR$. The one-parameter flow $\big\{\psi_t\big\}_{t\in\RR}$ is said to be {\it Ergodic} with respect to Lebesgue measure if any invariant set has  either zero measure or full measure.  This  flow is said {\it uniquely ergodic} if it admits  only one invariant Borel probability measure, which is necessary the Lebesgue measure. A useful tool in our study is the so-called  Birkhooff's Ergodic Theorem. Before giving a precise statement we need  to  introduce for  $p\in[1,\infty)$, the standard Lebesgue space $L^p(\T^d)$ which is  the set of measurable functions $f:\T^d\to \RR$ such that
 $$
 \|f\|_{p}=\left(\int_{\T} |f(x)|^p dx\right)^{\frac1p}<\infty,
 $$
 with the usual adaptation for the case $p=+\infty$. We also denote by $\langle f \rangle$ the spatial average  of   $f$  by
 $$
 \langle f \rangle=\int_{\T^d}f(x)dx.
 $$
 The following theorem where we collect several results  in  classical dynamical systems   can be found in standard books dealing with Ergodic Theory. For instance, we refer to  \cite{Petersen,Walters} and the references therein.
\begin{theorem}\label{Birk-Ergod}
Let $\phi \in L^1(\T^d)$ and $\{\psi_t\}_{t\in\RR}$ be a one-parameter group of transformations on the torus $\T^d$ preserving Lebesgue measure. Then there exists $\phi_\star\in  L^1(\T^d)$ such that

\begin{equation}\label{Birk-Ergod1}
\lim_{t\to+\infty}\frac1t\int_0^t\phi(\psi(\tau,x))d\tau=\phi_\star(x), a.e.
\end{equation}
where $\phi_\star$ satisfies the properties
\begin{enumerate}
\item For any $t\in\RR, \phi_\star(\psi(t,x))=\phi_\star(x),$ a.e.
\item If $\phi \in L^p(\T^d)$ then $\phi_\star\in L^p(\T^d)$ with $ \|\phi_\star\|_{L^p}\leq \|\phi\|_{L^p}.$
\item If $A$ is invariant by the flow then 
$$
\int_A \phi_\star(x)dx=\int_A\phi(x) dx.
$$
\item If $\phi\in L^p(\T^d)$ with   $p\in[1,\infty)$, then the convergence in \eqref{Birk-Ergod1} holds in $L^p(\T^d)$.
\item If the flow $\{\psi_t\}_{t\in\RR}$ is ergodic then $\phi_\star(x)=  \langle \phi \rangle$ a.e.
\item If the flow $\{\psi_t\}_{t\in\RR}$ is uniquely ergodic and  $\phi\in \mathscr{C}(\T^d;\RR)$, then the  convergence in \eqref{Birk-Ergod} to the spatial average  $ \langle \phi \rangle$ occurs everywhere and   is uniform.
\end{enumerate}
\end{theorem}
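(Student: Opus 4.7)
The statement is a compilation of classical facts from ergodic theory, so my plan is to assemble standard ingredients in the right order rather than invent a new proof. The central ingredient is the pointwise convergence \eqref{Birk-Ergod1}, for which I would reduce the continuous-time flow to a discrete iteration by setting $T=\psi_1$ and $\tilde\phi(x)=\int_0^1 \phi(\psi(s,x))\,ds$. A Fubini argument gives $\tilde\phi\in L^1(\T^d)$, and the continuous ergodic average of $\phi$ differs from the discrete Birkhoff sum $\frac{1}{N}\sum_{k=0}^{N-1}\tilde\phi(T^k x)$ only by a boundary term of order $O(1/t)$. It therefore suffices to invoke the discrete Birkhoff theorem for the Lebesgue-preserving map $T$, which follows by the usual Garsia-style argument from the maximal ergodic inequality together with the observation that the set $\{\limsup-\liminf>\varepsilon\}$ is $T$-invariant, hence forced to have measure zero.

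Once \eqref{Birk-Ergod1} is established, most of the listed properties fall out essentially for free. Invariance (i) comes from the change of variables $\tau\mapsto\tau+t$ in the ergodic average, the resulting error being of order $|t|/T\to 0$. Property (ii) follows from Jensen inside the average combined with Fatou and the invariance of Lebesgue measure under $\psi_\tau$. Property (iii) is obtained by integrating the averages on $A$, exploiting $\psi_\tau(A)=A$ to unfold the integral, and passing to the limit via dominated convergence (the $L^1$ control from the maximal inequality suffices). For (iv), I would split $\phi=\phi_n+(\phi-\phi_n)$ with $\phi_n=\phi\,\mathbf{1}_{|\phi|\leq n}$: the bounded piece converges in $L^p$ by dominated convergence after the a.e.\ convergence is known, while the tail is controlled uniformly in $t$ by Minkowski's integral inequality and the invariance of $\lambda$. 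Item (v) is then immediate: an invariant $L^1$ function under an ergodic flow is constant a.e., and (iii) applied with $A=\T^d$ identifies that constant as $\langle\phi\rangle$.

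The only genuine obstacle is (vi), uniform convergence for continuous $\phi$ under unique ergodicity. The plan is a weak-$\ast$ compactness argument by contradiction: assume there exist $\varepsilon_0>0$, $t_n\to\infty$, and $x_n\in\T^d$ with $\bigl|\frac{1}{t_n}\int_0^{t_n}\phi(\psi(\tau,x_n))d\tau-\langle\phi\rangle\bigr|\geq\varepsilon_0$. The empirical measures $\mu_n=\frac{1}{t_n}\int_0^{t_n}\delta_{\psi(\tau,x_n)}d\tau$ form a tight family on the compact torus, so Prokhorov extracts a weak-$\ast$ limit $\mu$. A telescoping estimate shows that for every continuous $g$ and every $s\in\RR$,
\begin{equation*}
\Bigl|\int g\circ\psi_s\,d\mu_n-\int g\,d\mu_n\Bigr|\leq \frac{2|s|}{t_n}\|g\|_\infty\longrightarrow 0,
\end{equation*}
so the limit $\mu$ is flow-invariant. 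Unique ergodicity forces $\mu=\lambda$, and then evaluating $\int\phi\,d\mu_n$ along the extracted subsequence contradicts the standing lower bound. The delicate point is really ensuring invariance of $\mu$, which is precisely where the continuity of $\phi$ (and of test functions $g$) is needed and where the $\mathscr{C}(\T^d;\RR)$ hypothesis enters; the rest of the reasoning is essentially bookkeeping.
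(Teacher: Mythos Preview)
The paper does not actually prove this theorem: it is stated as a compilation of classical results and referred to standard references (Petersen, Walters), with no argument given in the text. So there is no ``paper's own proof'' to compare against; the authors treat Theorem~\ref{Birk-Ergod} as background.

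That said, your sketch is a correct and standard assembly of the classical proofs. The reduction of the continuous-time average to the discrete Birkhoff sums of $\tilde\phi(x)=\int_0^1\phi(\psi(s,x))\,ds$ under $T=\psi_1$ is the usual device, and your treatment of (i)--(v) is accurate. For (vi), the weak-$\ast$ compactness/contradiction argument via empirical measures is exactly the textbook route, and your telescoping estimate showing invariance of any limit point is the key step. One minor caveat: in your argument for (iii) you appeal to dominated convergence using ``$L^1$ control from the maximal inequality''; strictly speaking the maximal function need not dominate the averages pointwise in a way that gives an integrable majorant, so the cleanest justification is to first establish the $L^1$ convergence in (iv) (or use the Dunford--Schwartz dominated ergodic theorem) and then integrate over $A$. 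This is a matter of ordering rather than a genuine gap.
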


%
Next we shall give some specific vector fields that generate in the $2d$-torus  unique ergodic flows. \\
{\it Examples of uniquely ergodic flows on the torus}. The following result comes from Sait\^o \cite{Saito} and Maoan \cite{Mao}. Take a $\mathscr{C}^1$ solenoidal vector field $v=(v^1,v^2)$ without any zero and  such that 
$$
v^1(x,y)=\sum_{n,m\in\Z} a_{mn}e^{2i\pi(mx+ny)},\quad v^2(x,y)=\sum_{n,m\in\Z} b_{mn}e^{2i\pi(mx+ny)},
$$
Then the flow associated to $v$ is ergodic if and only if 
$$
a_{00} b_{00}\neq0 \quad \hbox{and}\quad \frac{a_{00}}{b_{00}}\notin \mathbb{Q}.
$$
Under the same conditions the flow is also uniquely ergodic.
\section{Inviscid case}
This section is devoted  the case $\nu=0$ where  the equation \eqref{eqn:tran} reduces to 
\begin{equation} \label{eqn:tran1}
\left\{ \begin{array}{ll}
  \partial_t \theta(t,x) + v(x)\cdot \nabla \theta(t,x)= -\,\phi(x)\, \theta(t,x)\quad (t,x)\in \mathbb{R}_+\times \T^d,&\\ 
  \theta_0|_{t=0}=\theta_0,
  \end{array}\right.
\end{equation}
Our main result deals with the time decay of the energy. 
\begin{theorem}\label{Theo-invic}
Let $d\geq1$, $v:\T^d\to\RR^d$ be in the Lipschitz class and $ \phi:\T^d\to\RR$ be a positive and integrable function such that  its average $ \langle \phi \rangle>0$. Then the following assertions hold true.  
\begin{enumerate}
\item If  the flow $\{\psi_t\}_{t\in\RR}$ is ergodic, then for $ \theta_0\in L^p(\T^d)$ with $ p\in[1,\infty)$,  we have 
$$
\lim_{t\to+\infty}\|\theta(t)\|_{L^p}=0.
$$
\item  If  the flow $\{\psi_t\}_{t\in\RR}$ is uniquely  ergodic and  $\phi\in \mathscr{C}(\T^d)$. Then for any $ \theta_0\in L^p(\T^d)$ with $p\in[1,\infty]$  and  for any $\mu\in [0, \langle \phi\rangle)$ there exists $T_0>0$ such that  
$$
\forall t\geq T_0,\quad \|\theta(t)\|_{L^p}\leq \|\theta_0\|_{L^p}e^{-\mu t}.
$$
\item Assume that $\phi_\star(x)>0, a.e.$ and $ \theta_0\in L^p(\T^d)$ with $ p\in[1,\infty)$, then
$$
\lim_{t\to+\infty}\|\theta(t)\|_{L^p}=0.
$$
\end{enumerate}
\end{theorem}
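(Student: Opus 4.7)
The plan is to build everything on the method of characteristics. Since $v$ is Lipschitz and divergence-free, the flow $\psi_t$ is well-defined, invertible, and Lebesgue-measure preserving. I would first derive, by differentiating $t\mapsto \theta(t,\psi_t(x))$ along the flow, the pointwise representation
\begin{equation*}
\theta(t,\psi_t(x))=\theta_0(x)\, e^{-\int_0^t \phi(\psi_\tau(x))\,d\tau}.
\end{equation*}
Setting $y=\psi_t(x)$ and using the group property $\psi_\tau\circ\psi_{-t}=\psi_{\tau-t}$, this becomes $\theta(t,y)=\theta_0(\psi_{-t}y)\,\exp\bigl(-\int_{-t}^0\phi(\psi_s y)\,ds\bigr)$. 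Then, raising to the $p$-th power, changing variables by $z=\psi_{-t}y$ (which preserves $dy$), and translating the time integral back to $[0,t]$, I obtain the master identity
\begin{equation*}
\|\theta(t)\|_{L^p}^p=\int_{\T^d}|\theta_0(z)|^p\, e^{-p\int_0^t \phi(\psi_s z)\,ds}\,dz,\qquad p\in[1,\infty),
\end{equation*}
with the obvious essential-supremum analogue for $p=\infty$. From here the three assertions will all reduce to an asymptotic analysis of $\int_0^t \phi(\psi_s z)\,ds$ via the Birkhoff theorem cited earlier.

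For assertion (i), ergodicity gives $\phi_\star=\langle\phi\rangle>0$ a.e., so $\int_0^t \phi(\psi_s z)\,ds\to+\infty$ for almost every $z$, hence the exponential factor converges pointwise a.e. to $0$. Since the integrand is dominated by the fixed integrable function $|\theta_0(z)|^p$, Lebesgue's dominated convergence theorem yields $\|\theta(t)\|_{L^p}\to 0$. Assertion (iii) is handled identically: the assumption $\phi_\star>0$ a.e. is exactly what is needed to make $\int_0^t \phi(\psi_s z)\,ds\to\infty$ a.e., and the same dominated convergence argument applies, without requiring ergodicity.

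For assertion (ii), unique ergodicity together with $\phi\in\mathscr{C}(\T^d)$ upgrades the Birkhoff convergence to a \emph{uniform} one: for any $\mu\in[0,\langle\phi\rangle)$ there exists $T_0>0$ such that
\begin{equation*}
\forall\,t\geq T_0,\quad \forall\, z\in\T^d,\qquad \tfrac{1}{t}\int_0^t \phi(\psi_s z)\,ds\;\geq\;\mu.
\end{equation*}
Plugging this into the master identity gives $\|\theta(t)\|_{L^p}^p\leq e^{-p\mu t}\|\theta_0\|_{L^p}^p$ for all finite $p$, and the $p=\infty$ case follows directly from the pointwise representation and the uniform lower bound. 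The main subtlety to watch out for is the one that motivates the separation of cases: in (i) the Birkhoff convergence is only a.e., which is why one cannot extract a uniform exponential rate and must settle for qualitative decay via dominated convergence, while in (ii) the uniformity provided by unique ergodicity and continuity of $\phi$ is precisely what upgrades this to an exponential rate arbitrarily close to $\langle\phi\rangle$. A minor technical point will be to justify the change of variables under the flow in the master identity, which follows from the Jacobian-one property $\det D\psi_t\equiv 1$ implied by $\textnormal{div}\,v=0$.
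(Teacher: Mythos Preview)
Your proposal is correct and follows essentially the same route as the paper: derive the characteristic representation $\theta(t,\psi_t(x))=\theta_0(x)e^{-\int_0^t\phi(\psi_\tau(x))\,d\tau}$, use measure preservation to get the master identity $\|\theta(t)\|_{L^p}^p=\int|\theta_0|^p e^{-p\int_0^t\phi\circ\psi_\tau\,d\tau}\,dz$, and then apply Birkhoff's theorem (pointwise a.e.\ plus dominated convergence for (i) and (iii), uniform convergence for (ii)). The only cosmetic difference is that the paper goes directly from $\|\theta(t)\|_{L^p}=\|\eta(t)\|_{L^p}$ by measure invariance, whereas you detour through $y=\psi_t(x)$ and back; the resulting identity and the remaining argument are identical.
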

\begin{remark}
Notice that in the point $(iii)$ we do not assume the ergodicity of the flow, which is substituted by the strict positivity of $\phi_\star$ almost everywhere. From this result, we can deduce that for any non negligible invariant set  $A$, we have necessary $\textnormal{supp } \phi\cap A$ is a non negligible set. Indeed, using Theorem \ref{Birk-Ergod}-$(iii)$ we achieve that
$$
\int_{A}\phi_\star(x) dx=\int_A\phi(x)dx.
$$
From the assumption on $\phi_\star$ we easily get $\int_A\phi(x)dx>0$ and this ensures the desired result. This gives an insight where  one should localize  the control region in order to get the time  decay.
\end{remark}
\begin{proof}
{\bf{(i)}}  Let $\{\psi_t\}_{t\in\RR}$ be the flow associated to the velocity field $v$. Then  
 by setting  $\eta(t,x)=\theta(t,\psi(t,x))$  it is quite easy to check that
$$
\partial_t\eta(t,x)=-\eta(t,x) \,\phi(\psi(t,x)).
$$
Thus we can recover from this ODE the solution as follows,
\begin{equation}\label{Repr}
\eta(t,x)=\theta_0(x) e^{-\int_0^t\phi(\psi(\tau,x))d\tau}.
\end{equation}
Since the flow  is assumed to be ergodic  then  according to Theorem \ref{Birk-Ergod}-(v) the function $\phi_\star$ given by \eqref{Birk-Ergod} is constant and coincides with its spatial average, meaning that, 
\begin{equation}\label{Birk1}
\lim_{t\to+\infty}\frac1t\int_0^t\phi(\psi(\tau,x))d\tau=\langle \phi\rangle,\quad  a.e.
\end{equation}
Now using the identity \eqref{Repr} combined with the invariance of the Lebesgue measure under the flow we deduce that
\begin{eqnarray}\label{Repr01}
\nonumber\|\theta(t)\|_{L^p}^p&=&\|\eta(t)\|_{L^p}^p\\
&=&\int_{\T^2}|\theta_0(x)|^p e^{-p\int_0^t\phi(\psi(\tau,x))d\tau} dx.
\end{eqnarray}
From  \eqref{Birk1} and  $\langle \phi\rangle >0$ we infer that  
$$
\lim_{t\to+\infty}\int_0^t\phi(\psi(\tau,x))d\tau=+\infty\quad  a.e.
$$
Hence we get the pointwise convergence
$$
\lim_{t\to+\infty} e^{-p\int_0^t\phi(\psi(\tau,x))d\tau}=0\quad  a.e.
$$
Moreover from the positivity of $\phi$ we get the uniform domination
$$
\forall t\geq0,\,|\theta_0(x)|^p e^{-p\int_0^t\phi(\psi(\tau,x))d\tau}\leq |\theta_0(x)|^p\quad  a.e.
$$
Thus, by virtue of  Lebesgue's dominated  convergence theorem we obtain for any $p\in[1,\infty),$
$$
\lim_{t\to+\infty} \|\theta(t)\|_{L^p}=0.
$$
{\bf{(ii)}} Since $\phi$ is continuous and the  flow $\{\psi_t\}_{t\in\RR}$ is supposed to be uniquely  ergodic  then using once again Theorem \ref{Birk-Ergod}-(vi) we get that   the convergence in \eqref{Birk1}  is uniform. Consequently for any $0<\varepsilon<\langle \phi\rangle$  we can find $T_0>0$ such that
$$
\forall t\geq T_0,\,\forall x\in \T^d,\quad \langle \phi\rangle-\varepsilon  \leq \frac1t\int_0^t\phi(\psi(\tau,x))d\tau
$$
Combined with \eqref{Repr01} we find that
\begin{equation*}
\forall t\geq T_0,\quad \|\theta(t)\|_{L^p}
\leq e^{-(\langle\phi\rangle-\varepsilon) t} \|\theta_0\|_{L^p}.
\end{equation*}
This achieves the proof by taking $\mu=\langle \phi\rangle-\varepsilon$.\\
{\bf{(iii)}}  By virtue of  Theorem \ref{Birk-Ergod} we have 
$$
\lim_{t\to+\infty}\frac1t\int_0^t\phi(\psi(\tau,x))d\tau=\phi_\star(x), a.e.
$$
Therefore, we get from  the assumption $\phi_\star(x)>0, a.e.$ that
$$
\lim_{t\to+\infty}\int_0^t\phi(\psi(\tau,x))d\tau=+\infty, a.e.
$$
Then it is enough to apply   Lebesgue's dominated  convergence theorem with \eqref{Repr01} in order to get the desired result.
\end{proof}
%
%
\section{Viscous case}
In this section we shall deal with the viscous case and explore the exponential decay for small viscosity. Our  main result reads as follows.
\begin{theorem}\label{Theo-diff}
Let $d\geq1, p\in[2,\infty)$ and  $ \phi:\T^d\to\RR$ be a positive Lipschitz function such that  $ \langle \phi \rangle>0$. 
Assume that  the flow $\{\psi_t\}_{t\in\RR}$ is uniquely  ergodic.  Let  $\mu\in [0, \langle \phi\rangle)$, then there exists a constant   $C_0>0$ depending explicitly  on the Lipschitz norms of $\phi$ and $v$ and a constant $C_p$ depending only on $p,\, \mu $ and $C_0$ such that for any  $\nu\in(0,1)$ the solution $\theta$ of \eqref{eqn:tran} satisfies 
$$\forall t\in[0, C_0\ln(1/\nu)],\quad \|\theta(t)\|_{L^p}\leq C_p\|\theta_0\|_{L^p}e^{-\mu t}.
$$
We emphasize that neither $C_0$ nor $C_p$ depends on the small viscosity  $\nu\in(0,1).$

\end{theorem}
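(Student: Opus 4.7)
My plan is to prove the viscous bound by comparing the viscous solution with the inviscid evolution of Theorem~\ref{Theo-invic}(ii), and estimating the viscous perturbation via an $L^p$ energy estimate. Writing $\theta = \theta_I + \rho$ with $\theta_I(t) = e^{tL}\theta_0$ the inviscid solution, the remainder $\rho$ satisfies
\[
\partial_t\rho + v\cdot\nabla\rho + \phi\rho - \nu\Delta\rho = \nu\Delta\theta_I,\qquad \rho(0) = 0.
\]
Testing against $|\rho|^{p-2}\rho$ for $p\geq 2$, using $\textnormal{div}\,v = 0$, $\phi\geq 0$, integrating by parts on the source $\nu\Delta\theta_I$, and absorbing half of the resulting cross term via Young's inequality into the viscous dissipation, I would obtain
\[
\frac{d}{dt}\|\rho(t)\|_{L^p}^2 \leq (p-1)\nu\,\|\nabla\theta_I(t)\|_{L^p}^2,
\]
and hence $\|\rho(t)\|_{L^p}^2 \leq (p-1)\nu \int_0^t\|\nabla\theta_I(s)\|_{L^p}^2\,ds$.

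The key input is a pointwise-in-time bound on $\|\nabla\theta_I\|_{L^p}$. Differentiating the inviscid equation and setting $\xi(s,x)=(\nabla\theta_I)(s,\psi(s,x))$, I get the characteristic ODE
\[
\partial_s\xi + \phi(\psi(s,x))\,\xi = -(\nabla v)(\psi(s,x))^{\!\top}\xi - (\nabla\phi)(\psi(s,x))\,\theta_I(s,\psi(s,x)).
\]
Duhamel along the flow, the uniform Birkhoff theorem (Theorem~\ref{Birk-Ergod}(vi)) providing $\Phi(t,x)-\Phi(s,x)\geq\mu(t-s)-\mu T_0$ uniformly in $x$, the invariance of Lebesgue measure under $\psi_t$, and the inviscid decay $\|\theta_I(s)\|_{L^p}\leq C e^{-\mu s}\|\theta_0\|_{L^p}$ then yield, via Gronwall,
\[
\|\nabla\theta_I(s)\|_{L^p} \leq C(1+s)\,e^{(K-\mu)s}\bigl(\|\nabla\theta_0\|_{L^p} + \|\theta_0\|_{L^p}\bigr),
\]
with $K$ depending only on the Lipschitz norms of $v$ and $\phi$.

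Combining the two, for smooth $\theta_0$,
\[
\|\rho(t)\|_{L^p} \leq C\sqrt{\nu}\,(1+t)\,e^{(K-\mu)t}\bigl(\|\nabla\theta_0\|_{L^p} + \|\theta_0\|_{L^p}\bigr).
\]
Choosing $C_0 < 1/(2K)$, on $t\leq C_0\ln(1/\nu)$ one has $\sqrt{\nu}\,e^{Kt} \leq \nu^{1/2-KC_0}$, so
\[
\sqrt{\nu}\,(1+t)\,e^{(K-\mu)t} = (1+t)\,e^{-\mu t}\,\sqrt{\nu}\,e^{Kt} \leq C\ln(1/\nu)\,\nu^{1/2-KC_0}\,e^{-\mu t} \leq C'\,e^{-\mu t},
\]
uniformly in $\nu\in(0,1)$. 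Together with $\|\theta_I(t)\|_{L^p}\leq C e^{-\mu t}\|\theta_0\|_{L^p}$, this gives $\|\theta(t)\|_{L^p}\leq C_p\,e^{-\mu t}(\|\theta_0\|_{L^p}+\|\nabla\theta_0\|_{L^p})$ for smooth data. A mollification argument at the end removes the $\|\nabla\theta_0\|_{L^p}$ dependence: approximating $\theta_0\in L^p$ by $\theta_0^\varepsilon$ at scale $\varepsilon=\varepsilon(\nu)$, using $\|\theta_0^\varepsilon\|_{W^{1,p}}\leq C\varepsilon^{-1}\|\theta_0\|_{L^p}$ together with the $L^p$-contractivity $\|\theta(t)-\theta^\varepsilon(t)\|_{L^p}\leq\|\theta_0-\theta_0^\varepsilon\|_{L^p}$ of the linear viscous semigroup, and balancing $\varepsilon$ against $\nu^{1/2-KC_0}$.

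The main obstacle is that $\|\nabla\theta_I(s)\|_{L^p}$ grows exponentially at rate $\|\nabla v\|_{L^\infty}$ because the flow stretches gradients, only partially compensated by the ergodic damping $e^{-\mu s}$; the $\sqrt{\nu}$ gain from the $L^p$ energy estimate for $\rho$ is exactly what allows this growth to be absorbed on the window $[0,C_0\ln(1/\nu)]$, forcing $C_0$ to be inversely proportional to the Lipschitz norms of $v$ and $\phi$, in agreement with the theorem's announced dependence.
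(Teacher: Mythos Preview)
Your splitting $\theta=\theta_I+\rho$ and the $L^p$ energy estimate for $\rho$ are correct, and your bound $\|\nabla\theta_I(s)\|_{L^p}\le C(1+s)e^{(K-\mu)s}\|\theta_0\|_{W^{1,p}}$ is essentially right (one can even drop the ergodic damping there and get the cruder $Ce^{Ks}\|\theta_0\|_{W^{1,p}}$ with $K=\|\nabla v\|_{L^\infty}$). So for $\theta_0\in W^{1,p}$ your argument does yield
\[
\|\theta(t)\|_{L^p}\le C_p\,e^{-\mu t}\bigl(\|\theta_0\|_{L^p}+\|\nabla\theta_0\|_{L^p}\bigr),\qquad t\in[0,C_0\ln(1/\nu)].
\]

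The genuine gap is the final mollification step. The $L^p$--contractivity of the viscous semigroup gives $\|\theta(t)-\theta^\varepsilon(t)\|_{L^p}\le\|\theta_0-\theta_0^\varepsilon\|_{L^p}$, but for a general $\theta_0\in L^p$ the right--hand side tends to $0$ with \emph{no rate} in $\varepsilon$, so there is nothing to ``balance against $\nu^{1/2-KC_0}$''. Concretely, at the endpoint $t=C_0\ln(1/\nu)$ you would need $\|\theta_0-\theta_0^{\varepsilon(\nu)}\|_{L^p}\le C_p\,\nu^{\mu C_0}\|\theta_0\|_{L^p}$ uniformly in $\theta_0$, and this fails (take $\theta_0$ concentrated at scale $\lesssim\varepsilon(\nu)$). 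Hence your proof establishes the theorem only with $\|\theta_0\|_{L^p}$ replaced by $\|\theta_0\|_{W^{1,p}}$ on the right.

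The paper avoids this by a different, gradient--free route. It sets $h(t,x)=e^{V(t,x)}\theta(t,\psi(t,x))$ with $V(t,x)=\int_0^t\phi(\psi(\tau,x))\,d\tau$ and performs the $L^p$ energy estimate directly on $h$. After changing variables back to Eulerian coordinates and integrating by parts, the only bad term is the commutator of $\Delta$ with the weight $e^{pV(t,\psi(-t,\cdot))}$, which is controlled by $\|\nabla(V(t,\psi(-t,\cdot)))\|_{L^\infty}^2\le C_0e^{C_0t}$. This quantity involves gradients of $\phi$ and of the flow, never of $\theta$ or $\theta_0$. Gronwall then gives $\|h(t)\|_{L^p}\le e^{C_0p}\|\theta_0\|_{L^p}$ on $[0,C_0^{-1}\ln(1/\nu)]$, and unique ergodicity supplies the lower bound $\|h(t)\|_{L^p}\ge e^{\mu t}\|\theta(t)\|_{L^p}$. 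The structural point is that by building the ergodic decay into the weight \emph{before} testing, the viscous error lands on the known function $V$ rather than on $\theta_I$, so no derivative of the data ever enters.
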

\begin{proof}
Set $\eta(t,x)=\theta(t,\psi(t,x))$ then it satisfies the equation 
$$
\partial_t\eta(t,x)-\nu( \Delta\theta)(t,\psi(t,x))=-\eta(t,x) \,\phi(\psi(t,x)).
$$
Define $V(t,x)=\int_0^t\phi(\psi(\tau,x))d\tau$ and $h(t,x)=e^{V(t,x)}\eta(t,x)$ then it is easy to check 
$$
\partial_t h(t,x)-\nu e^{V(t,x)}\, (\Delta\theta)(t,\psi(t,x))=0.
$$
We multiply this equation by $|h|^{p-2}h$ and we integrate in space variable,
$$
\frac{1}{p}\frac{d}{dt}\|h(t)\|_{L^p}^p-\nu\int_{\T^d}e^{V(t,x)}( \Delta\theta)(t,\psi(t,x)) |h(t,x|^{p-2}h(t,x)dx=0.
$$
This energy identity  can be written in the form
\begin{equation}\label{Energ1}
\frac{1}{p}\frac{d}{dt}\|h(t)\|_{L^p}^p-\nu \,\mathcal{I}(t)=0,
\end{equation}
with 
$$
\mathcal{I}(t):=\int_{\T^d} e^{pV(t,x)}( \Delta\theta)(t,\psi(t,x)) \big|\theta\big(t,\psi(t,x)\big)\big|^{p-2}\theta\big(t,\psi(t,x)\big)dx.
$$
Since the flow preserves Lebesgue measure and $\psi^{-1}_t=\psi_{-t}$ then change of variables leads to
$$
\mathcal{I}(t)=\int_{\T^d} e^{pV(t,\psi(-t,x))}( \Delta\theta)(t,x) \big|\theta\big(t,x\big)\big|^{p-2}\theta\big(t,x\big)dx.
$$
Integrating by parts yields
\begin{align*}
\mathcal{I}(t)&=-(p-1)\int_{\T^d} e^{pV(t,\psi(-t,x))}|\nabla\theta(t,x)|^2 \big|\theta\big(t,x\big)\big|^{p-2}dx\\
&-p\int_{\T^d} e^{pV(t,\psi(-t,x))} \big|\theta\big(t,x\big)\big|^{p-2}\theta\big(t,x\big)\,\nabla \theta(t,x)\cdot \nabla (V(t, \psi(-t,x))dx.
\end{align*}
Using the inequality $|ab|\leq \varepsilon a^2+\frac{1}{4\varepsilon} b^2$, for any $\varepsilon>0$ with a suitable choice of $\varepsilon$ allows to get
 \begin{align*}
&p\int_{\T^d} e^{pV(t,\psi(-t,x))} \big|\theta\big(t,x\big)\big|^{p-2}\theta\big(t,x\big)\,\nabla \theta(t,x)\cdot \nabla (V(t, \psi(-t,x))dx\leq\\
& \frac{p-1}{2}\int_{\T^d} e^{pV(t,\psi(-t,x))}|\nabla\theta(t,x)|^2 \big|\theta\big(t,x\big)\big|^{p-2}dx\\
&+\frac{p^2}{2(p-1)}\int_{\T^d} e^{pV(t,\psi(-t,x))} \big|\theta\big(t,x\big)\big|^{p}|\nabla (V(t, \psi(-t,x))|^2dx.
\end{align*}
Consequently
\begin{align*}
\mathcal{I}(t)&\leq - \frac{p-1}{2}\int_{\T^d} e^{pV(t,\psi(-t,x))}|\nabla\theta(t,x)|^2 \big|\theta\big(t,x\big)\big|^{p-2}dx\\
&+\frac{p^2}{2(p-1)}\|\nabla (V(t, \psi(-t,\cdot))\|_{L^\infty}^2\int_{\T^d} e^{pV(t,\psi(-t,x))} \big|\theta\big(t,x\big)\big|^{p}dx.
\end{align*}
Notice that from the change of variables $x\mapsto \psi(t,x)$ one gets
$$
\int_{\T^d} e^{pV(t,\psi(-t,x))} \big|\theta\big(t,x\big)\big|^{p}dx=\|h(t)\|_{L^p}^p.
$$
From the group property of the flow we obtain
\begin{align*}
V(t,\psi(-t,x))&=\int_0^t\phi(\psi(\tau,\psi(-t,x))d\tau\\
&=\int_0^t\phi(\psi(\tau-t,x)d\tau\\
&=\int_{-t}^0\phi(\psi(\tau,x)d\tau.
\end{align*}
Applying the chain rule we infer that for any $t\in\RR$
\begin{align*}
|\nabla (V(t, \psi(\tau,\cdot))\|_{L^\infty}\le&\|\nabla \phi\|_{L^\infty}\left|\int_{-t}^0\|\nabla \psi(\tau)\|_{L^\infty}d\tau\right|.
\end{align*}
Coming back to the ODE \eqref{flow1}, then  it is straightforward to obtain that for any $t\in\RR$
$$
\|\nabla \psi(t)\|_{L^\infty}\leq e^{|t|\|\nabla v\|_{L^\infty}}.
$$
It follows that
\begin{align*}
|\nabla (V(t, \psi(\tau,\cdot))\|_{L^\infty}^2\le&\|\nabla \phi\|_{L^\infty}^2|t|e^{2|t|\|\nabla v\|_{L^\infty}}\\
\le &C_0 e^{C_0|t|}.
\end{align*}
For some constant $C_0>0$ depending only on the Lipschitz norms of  $\phi$ and $v$. 
Putting together the preceding estimates allows to get
\begin{align*}
\mathcal{I}(t)&\leq - \frac{p-1}{2}\int_{\T^d} e^{pV(t,\psi(-t,x))}|\nabla\theta (t,x)|^2 \big|\theta\big(t,x\big)\big|^{p-2}dx+C_0\frac{p^2}{p-1}e^{C_0|t|} \|h(t)\|_{L^p}^p.
\end{align*}
Inserting this into \eqref{Energ1} implies that for any $t\geq0$,
$$
\frac{1}{p}\frac{d}{dt}\| h(t)\|_{L^p}^p+\nu \frac{p-1}{2}\int_{\T^d} e^{pV(t,\psi(-t,x))}|\nabla\theta(t,x)|^2 \big|\theta\big(t,x\big)\big|^{p-2}dx\leq C_0\frac{p^2}{p-1}\nu e^{C_0t} \|h(t)\|_{L^p}^p.
$$
From  Gronwall lemma  we deduce that for any $t\geq0,$
\begin{align*}
\|h(t)\|_{L^p}^p+\nu \frac{p(p-1)}{2}\int_0^t\int_{\T^d} e^{pV(\tau,\psi(-\tau,x))}|\nabla\theta (\tau,x)|^2 \big|\theta\big(\tau,x\big)\big|^{p-2}dxd\tau\leq &e^{C_0\frac{p^3}{p-1}\nu t e^{C_0t}} \|h(0)\|_{L^p}^p\\
\leq& e^{C_0\frac{p^3}{p-1}\nu e^{C_0t}} \|\theta_0\|_{L^p}^p.
\end{align*}
By  choosing $t\geq 0$ such that $\nu e^{C_0t}\leq1,$ that is, $0\leq t\leq C_0^{-1}\ln(1/\nu)$ we obtain
\begin{align*}
\|h(t)\|_{L^p}^p+\nu \frac{p(p-1)}{2}\int_0^t\int_{\T^d} e^{pV(\tau,\psi(-\tau,x))}|\nabla\theta (\tau,x)|^2 \big|\theta\big(\tau,x\big)\big|^{p-2}dxd\tau
\leq&e^{C\frac{p^3}{p-1}}\|\theta(0)\|_{L^p}^p,
\end{align*}
In particular, we get that for any $t\in[0,C_0^{-1}\ln(1/\nu)]$ and for any $p\in[2,\infty)$, 
\begin{equation}\label{Upp-b}
\|h(t)\|_{L^p}\leq e^{C_0 p}\|\theta(0)\|_{L^p}.
\end{equation}
Using the unique ergodicity of the flows we obtain by virtue of Theorem \ref{Birk-Ergod}-(v) that
$$
\lim_{t\to+\infty}\left\|\frac1t V(t,\cdot)-\langle\phi\rangle \right\|_{L^\infty}=0.
$$
Consequently, for any $0<\mu<\langle \phi\rangle$ there exists $T_0>0$, independent of $\nu$ and $p$ such that 
$$
\forall t\geq T_0, \forall x\in\T^d,\quad \mu \leq \frac1t V(t,x).
$$
This allows to get the lower bound: for any $t\geq T_0$
\begin{align*}
\|h(t)\|_{L^p}^p=&\int_{\T^d} e^{pV(t,x)}|\theta(t,\psi(t,x))|^pdx\\
\geq &e^{p \mu  t}\|\theta(t)\|_{L^p}^p.
\end{align*}
Hence
\begin{align*}
\|h(t)\|_{L^p}
\geq &e^{ \mu t}\|\theta(t)\|_{L^p}
\end{align*}
Combing this estimate with \eqref{Upp-b} yields for any $t\in[T_0, C_0^{-1}\ln(1/\nu)]$
$$
\|\theta(t)\|_{L^p}\leq e^{C_0p-\mu t}\|\theta_0\|_{L^p}
$$
Thus we can find a constant $C_p$ depending only on $p$ and the Lipschitz norms of $\phi$ and $v$ such that 
$$
\forall t\in[0,C_0^{-1}\ln(1/\nu)]|,\quad \|\theta(t)\|_{L^p}\leq C_pe^{-\mu t}\|\theta_0\|_{L^p}.
$$
This achieves the desired result.
\end{proof}

%


\begin{thebibliography}{99}

\bibitem{Asch} M. Asch and G. Lebeau, The spectrum of the damped wave operator for a bounded domain in $\mathbb{R}^2$,
{\it Exp. Math.,} {\bf 12} (2003), 227--241.

\bibitem{Cox} S. Cox and E. Zuazua, The Rate at
which Energy Decays in a Damped String, {\it Comm. Partial
Diff. Equations.,} {\bf 19} (1994), 213--243.

\bibitem{Freitas} P. Freitas, Spectral sequences for quadratic pencils and the inverse problem for the damped wave equation, {\it J. Math. Pures et Appl.,} {\bf 78} (1999), 965--980.

\bibitem{Lebeau} G. Lebeau, Equation des ondes amorties, Algebraic and geometric methods in mathematical
physics, (Kaciveli, 1993), 73--109, Math. Phys. Stud., 19, Kluwer Acad. Publ., Dordrecht, 1996.

\bibitem{Mao} H. Maoan, Unique ergodicity of flows on the torus and the rotation number, {\it Acta Mathematica Sinica,} {\bf 4} (1988), 338--342.

\bibitem{Nikola} I. Nikolaev and E. Zhuzhoma, {\it Flows on $2-$dimensional manifolds}, Lectures notes in Mathematics, 1705, Springer-Verlag, 1999.

\bibitem{Petersen} K. Petersen, {\it Ergodic theory}, Cambridge Studies in Advanced Mathematics, vol. 2, Cambridge University Press, Cambridge, 1989.

\bibitem{Saito} T. Sait\^o, On the measure-preserving flow on the torus, {\it J. Math. Soc. Japan.,} {\bf 3} (1951), 279--284.

\bibitem{Sjostrand} J. Sj\"ostrand, Asymptotic distribution of eigenfrequencies for damped wave equations, {\it Publ. RIMS, Kyoto Univ.,} {\bf 36} (2000), 573--611.

\bibitem{Walters} P. Walters, {\it An introduction to Ergodic Theory,}, Springer-Verlag, 1982.

\end{thebibliography}
\end{document}